\title[On the relative Clemens conjectures]{The relative Clemens Conjectures for $\frac{1}{2}$-log Calabi-Yau threefolds }
\author{Rodolfo Aguilar}
\dedicatory{ \large With an appendix joint with \rm Adrian Zahariuc}
\date{}
\address{\parbox{\linewidth}{Centro de Investigacion en Matemáticas, A.C. (CIMAT), Jalisco S/N, Col. Valenciana CP: 36023 Guanajuato, Gto, México}}
\email{\href{mailto:aaguilar.rodolfo@gmail.com}{aaguilar.rodolfo@gmail.com}}
\urladdr{\url{https://sites.google.com/view/rodolfo-aguilar/}}
\theoremstyle{plain} 
\newtheorem{thm}{Theorem}[section]
\newtheorem{prop}[thm]{Proposition}
\newtheorem{cor}[thm]{Corollary} 
\newtheorem{conj}{Conjecture}[section]
\theoremstyle{definition} 
\newtheorem{ques}{Question}[section]
\DeclareMathOperator{\dAJ}{dAJ}
\DeclareMathOperator{\AJ}{AJ}
\DeclareMathOperator{\dP}{dP}
\DeclareMathOperator{\Pic}{Pic}
\newcommand{\ev}{\mathrm{ev}}
\newcommand{\abs}[1]{\left\vert#1\right\vert}
\newcommand{\Oo}{\mathscr{O}}
\newcommand{\Mbar}{\overline{\mathcal{M}}}
\newcommand{\Pbb}{\mathbb{P}}
\newcommand{\Ocal}{\mathcal{O}}
\begin{document}

\begin{abstract} 
We formulate a relative analogue of the Clemens conjectures for \(\tfrac12\)-log Calabi–Yau threefold pairs \((X,Y)\) (where \(K_X+2Y\cong\Oo_X\)). This framework rests on the restoration of a perfect deformation/obstruction duality specific to the \(\tfrac12\)-log CY threefold setting. Based on this duality, we conjecture that for a generic intersection configuration on the boundary divisor \(Y\), the number of rational curves anchored to these points is finite, and every such curve possesses the balanced relative normal bundle \(N_{C/X}(-Y)\cong \mathscr{O}_C(-1)\oplus\mathscr{O}_C(-1)\). In a joint appendix with Adrian Zahariuc, we verify this framework for prime Fano threefolds of index two. Using specialization techniques, we demonstrate that the usual virtual complications of relative Gromov–Witten theory are naturally suppressed in this setting. This trivialization of the relative moduli space cleanly reduces the virtual invariants to honest, classical enumerative counts, thereby rigorously proving the geometric conjecture.

\end{abstract}
      
\maketitle

\section{Introduction}
The Clemens conjectures \cite{C87} provide a precise predictive framework for the geometry of rational curves on quintic threefolds. These conjectures rest on a synthesis of two fundamental ingredients:
\begin{enumerate}
\item A bridge between Hodge theory (infinitesimal Abel–Jacobi maps) and deformation theory, and
\item A perfect duality between deformations and obstructions.
\end{enumerate}
In the absolute Calabi–Yau setting, this duality allows one to detect curves and, under suitable hypotheses, to count them. Nevertheless, the Clemens predictions do not extend verbatim to all non-rigid Calabi–Yau threefolds: Voisin \cite{V03} exhibited examples with infinite families of lines, showing that the naive enumerative expectations may fail.

It is natural to ask whether an analogous framework can be formulated in the log-smooth setting, for pairs $(X,Y)$. The first ingredient — the connection between log–Hodge theory and deformation theory — holds in considerable generality for $\mathbb{Q}$-log Calabi–Yau threefolds (see \cite{AGG24,A25}). The second ingredient, however, is more delicate: in the general log CY threefold setting (pairs $(X,Y)$ with $K_X+Y\sim0$) the deformation/obstruction duality breaks down, and the symmetry that underlies Clemens's arguments is lost.

In earlier work \cite{A25} I studied these structural questions in detail and showed that the duality is restored in the special ``$\frac{1}{2}$-log Calabi–Yau threefold'' case, namely when
$$K_X + 2Y \cong \mathscr{O}_X.$$
This $\frac{1}{2}$-log CY threefold duality is the technical input we exploit here. The purpose of the present note is to extract and organize its direct geometric consequences: what enumerative statements follow when the duality holds, and when can one reduce modern virtual enumerative problems to classical counting?

Concretely, we propose that the appropriate analogue of the Clemens phenomenon in the relative setting concerns ``anchored'' rational curves — rational curves together with a fixed intersection configuration on the boundary divisor $Y$. The precise prediction we study is the following.

\begin{conj}\label{conj:Int}
Let $(X,Y)$ be a smooth $\frac{1}{2}$-log CY threefold pair and let $C$ be a general smooth rational curve of $Y$-degree $e$ intersecting $Y$ transversally. Denote $Z=C\cap Y$. Then:
\begin{enumerate}
\item $X$ admits only a finite number of rational curves of degree $e$ whose intersection with $Y$ is exactly $Z$.
\item Moreover, all such curves have the relative normal bundle
$$N_{C/X}(-Y)\cong \mathscr{O}_C(-1)\oplus\mathscr{O}_C(-1).$$
\end{enumerate}
\end{conj}
To test these classical predictions using modern enumerative invariants, one faces a central difficulty: in the absolute Calabi-Yau setting (such as the quintic threefold), virtual Gromov-Witten invariants are notoriously difficult to relate to honest, classical curve counts due to the presence of highly degenerate maps and multiple covers. To overcome this, we emphasize a two-part methodological approach that clarifies the contribution of this note. First, Jun Li's relative Gromov-Witten formalism provides the conceptual dictionary, translating classical intersection constraints on the boundary divisor $Y$ into a precise fibre problem within a relative moduli space. Second, specialization methods developed by A.~Zahariuc in \cite{Z18} act as the computational engine, converting these abstract fibre-questions into finite, classical counts. By combining these tools, we demonstrate that in favorable $\frac{1}{2}$-log CY threefold geometries, virtual complications are naturally suppressed and the relative moduli spaces trivialize. Thus, relative GW theory serves as the bridge allowing modern machinery to cleanly output actual enumerative numbers, yielding a rigorous verification of the Clemens-type predictions that remain elusive in the absolute case.

Using these ideas, we obtain a rigorous verification for prime Fano threefolds of index two (also known as del Pezzo threefolds). Following the notation of \cite{Z18}, let $X=\dP_3^d$ be a smooth del Pezzo threefold of degree $d$. By definition, this is a smooth projective Fano threefold such that $-K_X=2Y$. We call the divisor $Y$ the hyperplane class. Note that $\dP_2^d:=Y$ is a del Pezzo surface of the same degree. 

Using the notation of relative Gromov-Witten invariants \cite{Li01} recalled in Appendix \ref{S:Virtual}, our main verification is the following.
    
\begin{thm}\label{Intthm:Main} Let $X$ be a smooth del Pezzo threefold of degree $d\in \{2,3,4,5,8\}$, $X$ general if $d=2$. Let $e \geq 1$, and let $\xi=(\xi_1,\ldots,\xi_e) \in Y^e$ be a configuration of \emph{general} points. Then the fiber of the relative evaluation morphism 
\[ \ev:\Mbar_\Gamma(X,Y) \longrightarrow Y^e \]
over $\xi$ contains no maps with nontrivial target expansions (no rubber levels) and no reducible domains. Furthermore, the natural identification induces an isomorphism of stacks
\[
\mathrm{ev}^{-1}(\xi) \;\xrightarrow{\ \cong\ }\; \Mbar_{0,e}(X,e)\times_{X^e}\{\xi\},
\]
and this fiber is finite, reduced, and consists exclusively of closed immersions with smooth source which intersect $Y$ transversally, and have balanced normal bundle $\Ocal(e-1) \oplus \Ocal(e-1)$. 
\end{thm}
    
Theorem \ref{Intthm:Main} simultaneously proves Conjecture \ref{conj:Int} for prime Fano threefolds of index two and demonstrates the suppression of virtual complications in this setting. The full proof is deferred to Appendix \ref{S:Virtual} (written jointly with A.~Zahariuc). The argument relies heavily on the specialization techniques developed in \cite{Z18} to establish the moduli space trivialization. Furthermore, the description of the twisted normal bundle follows from \cite[Thm 1.1]{Z18} and the irreducibility of the moduli space of degree $e$ rational curves not covering a line \cite{CS09,LT19}; see also \cite{BJ24}.

This outcome stands in sharp contrast with the absolute Calabi–Yau situation. While the formal ingredients appear in both contexts, the enumerative expectations that fail in the absolute case \cite{V03} are rigorously realized in the examples of prime Fano threefolds of index two treated here.

Finally, we remark on an intriguing broader context for these results. A key global ingredient in our proof of Theorem \ref{Intthm:Main} is the description of the irreducible components of the moduli space of rational curves on Fano threefolds (e.g., \cite{CS09, LT19}). The Geometric Manin's Conjecture, as formulated by Lehmann and Tanimoto \cite{LT19}, provides a predictive framework for the global asymptotic growth and distribution of these components. In contrast, our relative Clemens framework provides a localized, ``anchored'' perspective: it predicts exactly how these global moduli spaces trivialize when cut down by maximal incidence conditions on a specific boundary divisor. Exploring the precise interplay between the asymptotic global distribution of curves predicted by Geometric Manin and the exact, finite virtual counts of the relative Clemens conjectures presents a tantalizing avenue for future research.

The paper is organized as follows:
\begin{itemize}
\item In Section \ref{S:Abs} we review the reasoning behind the classical Clemens conjectures.
\item In Section \ref{S:RelClem} we extract the enumerative consequences of the $\frac{1}{2}$-log CY threefold duality and formulate precise conjectures about anchored rational curves.
\item In Appendix \ref{S:Virtual}, written jointly with Adrian Zahariuc, we explain how the relative GW perspective, the $\frac{1}{2}$-log CY threefold duality and the specialization techniques used in \cite{Z18} combine to reduce the virtual problem to classical counting in the Fano threefold of index two and degree $d\geq 2$.

\end{itemize}

\section*{Acknowledgement} 
I would like to thank Mark Green and Phillip Griffiths for useful discussions that inspired this work. I also thank Cristhian Garay and Pedro Luis del Ángel for many stimulating discussions and for the excellent working environment at CIMAT.

This work was supported by a postdoctoral fellowship, Estancias Posdoctorales por México 2025, from the Secretaría de Ciencia, Humanidades, Tecnología e Innovación (SECIHTI), Mexico.

\section{Absolute case}\label{S:Abs}
In his ICM talk \cite{C87}, Herb Clemens stated a series of questions and conjectures regarding curves on Calabi-Yau threefolds and the Abel-Jacobi maps. First, we state them and next, we explain how one may infer them.

\begin{ques}[I]\label{ques:I}
Let $X$ be a smooth projective threefold with trivial canonical bundle. Let $F$ be a family of immersed rational curves on $X$. If $\dim F > 0$, is the Abel-Jacobi mapping 
$$ \AJ: F \to J(X) $$ 
always nontrivial?
\end{ques}

\begin{ques}[I$^*$]\label{ques:I*}
Let $X$ be a smooth projective threefold with trivial canonical bundle. Let $f:\mathbb{P}^1\to X$ be a locally immersive map. Is the infinitesimal Abel-Jacobi map 
$$ dAJ: H^0(N_{f,X})\to H^1(\Omega_X^2)^* $$
always injective?
\end{ques}

\begin{conj}[II, Clemens]\label{ques:II}
The generic quintic threefold $X\subset \mathbb{P}^4$ admits only a finite number of rational curves of each degree. Each rational curve is a smoothly embedded $\mathbb{P}^1$ with normal bundle 
$$ \mathscr{O}(-1)\oplus \mathscr{O}(-1). $$
Furthermore, all the rational curves on $X$ are mutually disjoint.
\end{conj}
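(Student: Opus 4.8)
The plan is to infer Conjecture \ref{ques:II} by combining the two ingredients highlighted in the introduction: the deformation/obstruction duality, which fixes the \emph{expected} count, and the Hodge-theoretic bridge of Question \ref{ques:I*}, which controls the \emph{actual} behaviour. The starting point is the numerology. For a rational curve $C=f(\mathbb{P}^1)\subset X$ of degree $d$ on a Calabi--Yau threefold one has $\deg N_{C/X}=(-K_X\cdot C)-2=-2$, hence $\chi(N_{C/X})=0$, so the space of degree-$d$ rational curves has virtual dimension $0$. Moreover $\det N_{C/X}\cong\omega_C\otimes(K_X|_C)^{-1}\cong\omega_C$, and since a rank-two bundle $E$ on $\mathbb{P}^1$ satisfies $E^\vee\otimes\det E\cong E$, Serre duality on $C$ produces a canonical isomorphism $H^1(N_{C/X})^\vee\cong H^0(N_{C/X})$: deformations and obstructions of $C$ are perfectly dual. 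In particular $C$ is a reduced isolated point of its Hilbert scheme exactly when $h^0(N_{C/X})=h^1(N_{C/X})=0$, and the unique rank-two bundle of degree $-2$ on $\mathbb{P}^1$ with vanishing cohomology is $\mathscr{O}(-1)\oplus\mathscr{O}(-1)$ --- which is why that normal bundle figures in the statement.

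For the finiteness assertion on the \emph{generic} quintic I would run the classical parameter count. Form the incidence variety $\mathcal{I}_d=\{(X,C):X\subset\mathbb{P}^4\text{ a quintic},\ C\subset X\text{ a rational curve of degree }d\}$ and project it to the $\mathbb{P}^{125}$ of quintics. Fibering $\mathcal{I}_d$ instead over the space $R_d$ of degree-$d$ rational curves in $\mathbb{P}^4$, which has dimension $5d+1$, the fibre over $C$ is the linear system of quintics through $C$; if $C$ imposes independent conditions on quintics this system has projective dimension $125-h^0(\mathscr{O}_C(5))=124-5d$, whence $\dim\mathcal{I}_d=125=\dim\mathbb{P}^{125}$ and the projection is generically finite. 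Thus the generic quintic contains only finitely many rational curves of each degree, and by the first paragraph each of them is a reduced point of the Hilbert scheme with normal bundle $\mathscr{O}(-1)\oplus\mathscr{O}(-1)$. The remaining clauses are variations on the same count: singular rational curves of degree $d$ sweep out a subfamily of $R_d$ of codimension at least two, and pairs of rational curves meeting at a point form a locus of codimension one inside the relevant incidence variety, so in both cases the generic quintic avoids them, forcing every rational curve to be a smooth $\mathbb{P}^1$ and the curves to be pairwise disjoint.

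The main obstacle --- and the reason Conjecture \ref{ques:II} remains open in general --- is precisely the independence-of-conditions hypothesis in this count: one must show that the rational curves of degree $d$ which actually lie on some quintic impose close to the expected number of conditions, i.e.\ that $h^1(\mathscr{I}_C(5))$ is small enough to keep $\dim\mathcal{I}_d\le 125$. This is transparent for small $d$ but genuinely delicate in general, since for large $d$ the general rational curve of degree $d$ lies on no quintic at all, and one is then forced to understand the (reducible, badly behaved) sublocus of $R_d$ consisting of curves that do, together with its contribution to $\mathcal{I}_d$.

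Finally, Question \ref{ques:I*} is the infinitesimal, Hodge-theoretic shadow of this picture: the duality $H^1(N_{C/X})^\vee\cong H^0(N_{C/X})$ identifies the space governing deformations of $C$ with the one governing its obstructions, and injectivity of $\dAJ$ on $H^0(N_{C/X})$ is the assertion that these deformations are faithfully recorded by the variation of Hodge structure of $X$. From it Question \ref{ques:I} follows at once: if $F$ is a positive-dimensional family of immersed rational curves and $0\neq v\in T_{[f]}F\subseteq H^0(N_{f,X})$, then $d(\AJ|_F)_{[f]}(v)=\dAJ(v)\neq 0$, so $\AJ|_F$ is non-constant. The point of the present note is that for the $\tfrac12$-log Calabi--Yau pair $(X,Y)$ this same trio --- virtual dimension, perfect duality, Hodge-theoretic detection --- persists for rational curves with a fixed intersection on $Y$.
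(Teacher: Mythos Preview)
This statement is a \emph{conjecture}, so neither you nor the paper offers a proof; what is at stake is the heuristic that makes the conjecture plausible, and here your route diverges from the paper's. The paper's motivation is intrinsic to its Hodge-theoretic framework: it \emph{assumes} Question~\ref{ques:I*} has a positive answer, invokes Theorem~\ref{thm:deltaDual} (the connecting map $\delta:H^1(T_X)\to H^1(N_{C/X})$ is dual to $\dAJ$) to conclude that $\delta$ is surjective, and then argues that a rational curve $C$ persisting under a generic deformation of the ambient Calabi--Yau must have $H^0(N_{C/X})=H^1(N_{C/X})=0$, forcing $N_{C/X}\cong\mathscr{O}(-1)\oplus\mathscr{O}(-1)$. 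In other words, the paper deduces Conjecture~\ref{ques:II} \emph{from} Question~\ref{ques:I*} via the deformation/Hodge bridge --- that is the whole point of Section~\ref{S:Abs}, and it is the template being exported to the relative setting in Section~\ref{S:RelClem}.

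Your argument is the classical extrinsic one: the incidence-variety parameter count over the $\mathbb{P}^{125}$ of quintics. This is correct as far as it goes (and you are right to flag the independence-of-conditions gap), but it bypasses the paper's central mechanism. You do recover the duality $H^0(N_{C/X})\cong H^1(N_{C/X})^*$ and the numerology in your first paragraph, and you mention Question~\ref{ques:I*} at the end, but only to deduce Question~\ref{ques:I} from it --- not to deduce Conjecture~\ref{ques:II}. What you are missing is the step the paper emphasizes: injectivity of $\dAJ$ $\Rightarrow$ surjectivity of $\delta$ $\Rightarrow$ the obstruction space is hit by ambient deformations $\Rightarrow$ on the generic quintic the curve is rigid with balanced normal bundle. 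Your parameter count gives an independent (and historically standard) reason to believe the conjecture, but it does not exhibit Conjecture~\ref{ques:II} as a consequence of the injectivity conjecture, which is the logical structure the paper is setting up and then transporting to the $\tfrac12$-log case.
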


\begin{ques}[III]\label{ques:III}
Let $X\subseteq \mathbb{P}^4$ be a generic hypersurface of degree $\ge 6$ (so that $K_V$ is positive). Is every curve $C\subset X$ a surface section? At least, is every smooth curve $C$ a ``surface section to first order''? That is, does the short exact sequence
$$ 0 \to N_{C/X} \to N_{C/\mathbb{P}^4} \to N_{X/\mathbb{P}^4}|_C \to 0 $$
split?
\end{ques}

\subsection{Families of rational curves}\label{ss:famRatCur}
We now motivate the above conjectures.

Let $X$ be a smooth projective variety of dimension $2n-1$ and let $F\to \mathbb{P}^1$ be a rational family of codimension $n$ cycles. The map $\AJ:\mathbb{P}^1\to J^n(X)$ is trivial because $J^n(X)$ is a complex torus. 

Question \ref{ques:I} would have a negative answer if we could construct a rational family whose generic fibre is a smooth rational curve. Note that even constructing isolated examples of rational curves is not straightforward \cite{C83}.

Suppose we can only construct families parametrized by higher genus curves. We may expect that the variation is non-trivial, this is, it can be detected by the first derivative. This motivates Question \ref{ques:I*}. 

If $X=\{\sum_{i=0}^4 X_i^5\}$ is the Fermat quintic threefold, it is known that $X$ contains cones of lines parametrized by Fermat plane curves of degree $5$, see \cite{AK91}. Both questions \ref{ques:I} and \ref{ques:I*} have a positive answer in this case.

\subsection{Duality and deformations}
We can deduce conjecture \ref{ques:II} from question \ref{ques:I*}, by connecting deformation theory to Hodge theory. 

Let $X$ be a smooth Calabi-Yau threefold and let $C\subset X$ be a smooth curve. One of the main features of curves in Calabi-Yau threefolds is the following duality
\begin{equation}\label{eq:dualCY}
H^0(N_{C/X})\cong H^1(N_{C/X})^*.
\end{equation}
This can be deduced from Serre duality and the adjunction formula.

Let us write $\mathcal{I}_C$ for the ideal sheaf of $C$ and denote by $$T_X(-\log C)=\{\nu\in T_X\mid \nu(\mathcal{I}_C)=\mathcal{I}_C\},$$ the sheaf of vector fields preserving the ideal $\mathcal{I}_C$. Note that it is not locally free. We have the following short exact sequence of sheaves on $X$:

\begin{equation}
0\to T_X(-\log C) \to T_X \to N_{C/X}\to 0.
\end{equation}
Taking cohomology, we obtain:

\begin{equation}
\cdots H^0(N_{C/X})\to H^1(T_X(-\log C))\to H^1(T_X)\overset{\delta}\to H^1(N_{C/X})\to \cdots
\end{equation}
Using a volume form $\Omega \in H^0(K_X)$, we have that 
$$H^1(T_X)\cong H^1(\Omega_X^2). $$
Using this and equation (\ref{eq:dualCY}), we can write $\delta: H^1(\Omega_X^2)\to H^0(N_{C/X})^*$. A simple but important observation is the following. 
\begin{thm}\label{thm:deltaDual} The map $\delta$ is the dual of $\dAJ$.
\end{thm}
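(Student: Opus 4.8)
The plan is to unwind both maps into bilinear pairings and check that they agree. Fix the isomorphism $T_X\xrightarrow{\sim}\Omega^2_X$, $\nu\mapsto\iota_\nu\Omega$, which induces $H^1(T_X)\cong H^1(\Omega^2_X)$, and note that the duality \eqref{eq:dualCY} is itself built from $\Omega$: adjunction gives $\det N_{C/X}\cong K_C$ (as $K_X$ is trivial), so the wedge product is a perfect pairing $N_{C/X}\otimes N_{C/X}\to K_C$, and Serre duality on $C$ then yields $H^1(N_{C/X})\cong H^0(N_{C/X})^\ast$. With these identifications both $\delta$ and the transpose $\dAJ^\ast$ are maps $H^1(\Omega^2_X)\to H^0(N_{C/X})^\ast$, so it suffices to prove that for every $\mu\in H^0(N_{C/X})$ and $\eta\in H^1(T_X)$,
\[ \langle\,\dAJ(\mu),\,\iota_\eta\Omega\,\rangle \;=\; \langle\,\delta(\iota_\eta\Omega),\,\mu\,\rangle . \]

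I would compute the right-hand side with Dolbeault representatives. Since $\delta$ is simply the map on cohomology induced by the sheaf surjection $T_X\to N_{C/X}$ (restriction to $C$ followed by projection to the normal bundle), if $\theta$ is a $\bar\partial$-closed $T_X$-valued $(0,1)$-form representing $\eta$, then $\delta(\iota_\eta\Omega)\in H^1(N_{C/X})$ is represented by $\bar\theta$, the $N_{C/X}$-valued $(0,1)$-form on $C$ obtained from $\theta|_C$ by projecting onto $N_{C/X}$. Pairing with $\mu$ via Serre duality on $C$ gives
\[ \langle\,\delta(\iota_\eta\Omega),\,\mu\,\rangle \;=\; \int_C \langle\mu,\bar\theta\rangle_\Omega , \]
where $\langle\mu,\bar\theta\rangle_\Omega$ denotes the $(1,1)$-form on $C$ obtained by wedging the two $N_{C/X}$-components through $N_{C/X}\otimes N_{C/X}\to K_C$ and keeping the $d\bar z$ coming from $\bar\theta$.

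For the left-hand side I would invoke the classical formula for the differential of the Abel--Jacobi map of a cycle (the variation-of-the-tube computation of Griffiths; cf. \cite{C87}): for a $\bar\partial$-closed $(2,1)$-form $\omega$ representing a class in $H^1(\Omega^2_X)$,
\[ \langle\,\dAJ(\mu),\,[\omega]\,\rangle \;=\; \int_C \big(\widetilde\mu\lrcorner\omega\big)\big|_C , \]
where $\widetilde\mu$ is any smooth lift of $\mu$ to a section of $T_X$ along $C$; this is independent of the lift because for $v$ tangent to $C$ one has $(\iota_v\omega)|_C=\iota_v(\omega|_C)=0$, the pullback of a $(2,1)$-form to the curve $C$ being zero. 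Specializing $\omega=\iota_\eta\Omega$, represented by $\iota_\theta\Omega$, a direct computation in local coordinates adapted to $C$ shows that $(\widetilde\mu\lrcorner\iota_\theta\Omega)|_C=\langle\mu,\bar\theta\rangle_\Omega$: contracting a normal vector into $\iota_\theta\Omega$ and restricting to $C$ reproduces exactly the $\Omega$-twisted wedge pairing of $\mu$ against the normal part of $\theta$. Hence the two integrals coincide and $\delta=\dAJ^\ast$.

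The main difficulty is precisely this last reconciliation. The map $\delta$ is a purely algebraic, functorial map of sheaf cohomology, whereas $\dAJ$ is transcendental, built from integration over a $3$-chain bounding the family of cycles; so the real work is in (i) pinning down the integral formula for $\dAJ$ together with the compatibility of the Čech and Dolbeault descriptions of the relevant cohomology, and (ii) carrying out the $\Omega$-twist bookkeeping of the third step cleanly enough that the transcendental integrand visibly collapses onto the algebraic Serre pairing on $C$. A more formal alternative would be to identify $\dAJ^\ast$ with Bloch's semiregularity map for $C$ and quote its compatibility with the normal bundle sequence, which again reduces the statement to the sign- and twist-chasing sketched above.
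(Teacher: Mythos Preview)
The paper does not actually supply a proof of Theorem~\ref{thm:deltaDual}; it only writes ``See \cite{A25} for a short proof due to Mark Green.'' So there is nothing in the paper to compare against beyond the attribution.

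Your proposal is a reasonable and essentially standard outline: reduce the claim to the equality of two bilinear pairings on $H^0(N_{C/X})\times H^1(T_X)$, compute the $\delta$-side by pushing a Dolbeault representative of $\eta$ through the sheaf surjection $T_X\to N_{C/X}$ and invoking Serre duality on $C$, and compute the $\dAJ$-side by the Griffiths tube integral formula $\langle \dAJ(\mu),[\omega]\rangle=\int_C(\widetilde{\mu}\lrcorner\omega)|_C$. You are correct that $\delta$ here is \emph{not} a connecting homomorphism but the map on $H^1$ induced by the surjection $T_X\twoheadrightarrow N_{C/X}$, so its Dolbeault description is exactly as you wrote. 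You are also correct that the independence of the lift $\widetilde{\mu}$ follows because $(2,1)$-forms restrict to zero on a curve. The honest self-assessment at the end is accurate: the genuine content is (i) the integral formula for $\dAJ$ together with the compatibility between \v{C}ech/Dolbeault models of $H^1(\Omega^2_X)$, and (ii) the local computation identifying $(\widetilde{\mu}\lrcorner\iota_\theta\Omega)|_C$ with the $\Omega$-twisted Serre integrand. Neither step is carried out here, so as written this is a correct strategy rather than a proof; but nothing in the plan is wrong, and the semiregularity-map reformulation you mention at the end is indeed a clean way to formalize it.
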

See \cite{A25} for a short proof due to Mark Green.

\begin{cor}\label{cor:1stDefdAJ} The first order deformations of a curve $C\subset X$ are controlled by $\dAJ^*$.
\end{cor}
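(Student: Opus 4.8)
The plan is to deduce the statement directly from Theorem \ref{thm:deltaDual} by reading the long exact sequence above through the dictionary of deformation theory. The first step is to identify the three relevant cohomology groups: $H^0(N_{C/X})$ is the space of first-order deformations of $C$ inside a fixed $X$; $H^1(T_X)$ is the space of first-order deformations of $X$; and $H^1(T_X(-\log C))$ is the space of first-order deformations of the \emph{pair} $(X,C)$. The last identification is the only one requiring an argument: the tangent space to $\operatorname{Def}(X,C)$ is the first hypercohomology of the two-term complex $[\,T_X\to N_{C/X}\,]$ with $T_X$ in degree zero and the restriction map in the middle, and since $C$ is smooth in $X$ this map is surjective as a morphism of sheaves, so the complex is quasi-isomorphic to its kernel, which is exactly the sheaf $T_X(-\log C)$ of the excerpt (which is not locally free). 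Under these identifications, $H^1(T_X(-\log C))\to H^1(T_X)$ is the forgetful map recording the induced deformation of $X$, and $H^0(N_{C/X})\to H^1(T_X(-\log C))$ records deformations of $C$ keeping $X$ fixed.

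Next I would run the portion
\begin{equation*}
H^0(T_X)\to H^0(N_{C/X})\to H^1(T_X(-\log C))\to H^1(T_X)\overset{\delta}\to H^1(N_{C/X})
\end{equation*}
of the long exact sequence. Exactness shows that the image of $H^1(T_X(-\log C))$ in $H^1(T_X)$ is precisely $\ker\delta$, i.e. a first-order deformation of $X$ extends to a deformation of the pair $(X,C)$ exactly when it lies in $\ker\delta$. Now Theorem \ref{thm:deltaDual} lets me replace $\delta$ by $\dAJ^*$, after the two identifications that statement presupposes: $H^1(T_X)\cong H^1(\Omega_X^2)$ via a chosen volume form $\Omega\in H^0(K_X)$, and $H^1(N_{C/X})\cong H^0(N_{C/X})^*$ via the Calabi--Yau duality (\ref{eq:dualCY}). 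Hence $\ker\delta=\ker\bigl(\dAJ^*\colon H^1(\Omega_X^2)\to H^0(N_{C/X})^*\bigr)=\operatorname{Im}(\dAJ)^{\perp}$: the deformations of $X$ that carry $C$ along are exactly those annihilating the image of the infinitesimal Abel--Jacobi map.

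Finally I would package the whole sequence as the extension
\begin{equation*}
0\to H^0(N_{C/X})/\operatorname{Im}\bigl(H^0(T_X)\to H^0(N_{C/X})\bigr)\to H^1(T_X(-\log C))\to \ker\dAJ^*\to 0,
\end{equation*}
which is the precise sense in which first-order deformations of $C\subset X$ are controlled by $\dAJ^*$: the middle term, the deformations of the pair, is an extension of $\operatorname{Im}(\dAJ)^{\perp}$ by the genuine curve deformations modulo ambient infinitesimal automorphisms, and combined with the self-duality (\ref{eq:dualCY}) this forces the deformation and obstruction spaces of $C$ to have equal dimension. Since Theorem \ref{thm:deltaDual} is granted, no step here is deep; I expect the only point needing care to be the deformation-theoretic identification of $H^1(T_X(-\log C))$ with $\operatorname{Def}(X,C)$ for a curve of codimension two --- where the quasi-isomorphism with $[\,T_X\to N_{C/X}\,]$ is the right tool --- together with making explicit what ``controlled by'' should mean in the applications to Conjecture \ref{ques:II}, which is exactly the content of the displayed extension.
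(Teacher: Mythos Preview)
Your proposal is correct and follows the same line as the paper, which states the corollary without an explicit proof as an immediate consequence of Theorem~\ref{thm:deltaDual} and the long exact sequence preceding it. You have made explicit the deformation-theoretic identifications (in particular $H^1(T_X(-\log C))\cong T\operatorname{Def}(X,C)$) and packaged the conclusion as an extension, which is more detail than the paper records but is exactly the content the author uses downstream when deducing that surjectivity of $\delta$ forces $H^0(N_{C/X})=H^1(N_{C/X})=0$ for a curve that moves with a generic deformation of $X$.
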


As $X$ is Calabi-Yau, the adjoint formula reads:
$$K_C\cong K_X\otimes \det N_{C/X}\cong \det N_{C/X} $$
As $C$ is a rational curve, we have that $K_C\cong \mathscr{O}_{\mathbb{P}^1}(-2)$. Pulling back, we have $f^* N_{C/X}\cong \mathscr{O}_{\mathbb{P}^1}(a)\oplus \mathscr{O}_{\mathbb{P}^1}(b)$ by the Grothendieck-Birkhoff theorem with $a+b=-2$. 

If we assume that question \ref{ques:I*} has a positive answer, we obtain by Corollary \ref{cor:1stDefdAJ} that $\delta:H^1(T_X)\to H^1(N_{C/X})$ is surjective. If we also assume that our smooth curve $C\subset X=X_0$  moves with a generic deformation $X_t$ of $X$, we must have $H^0(N_{C/X})=H^1(N_{C/X})=0$. If $C$ is a rational curve, by the above decomposition we have that $N_{C/X}=\mathscr{O}_{\mathbb{P}^1}(-1)\oplus \mathscr{O}_{\mathbb{P}^1}(-1)$.

\subsection{Triviality of Abel-Jacobi}
For question \ref{ques:III}, it is known that not every curve $C\subset V$ is a surface section. For the second part, the relation between the non-splitness of the short exact sequence in Question \ref{ques:III} and non-triviality of the Abel-Jacobi map was found by Clemens in \cite{C89}.

An answer to this question can be deduced from the results in \cite{G89}.

\section{Relative setting} \label{S:RelClem}
In this section, we propose an extension of the Clemens conjectures to the $\frac{1}{2}$ log CY smooth setting $(X,Y)$. This is, $X$ is a threefold, $Y$ a divisor such that $K_X + 2Y \cong \mathscr{O}_X$ both smooth. Throughout this section, we assume that the curve $C$ intersects the divisor $Y$ transversally.

\subsection{Families of anchored rational curves}
Let $X$ be smooth projective and $Y\subset X$ be a smooth divisor.  We can define Abel-Jacobi maps for $X\setminus Y$ and for the pair $(X,Y)$, see \cite{KLMS06, KL07}. 

We can ask the analogous question of Section \ref{ss:famRatCur}, this is, can we construct a rational family of rational curves on $X$? 

If $X \subset \mathbb{P}^4$ is a smooth cubic threefold, for degrees $2\leq d \leq 5$, we do have rational families of rational curves. We will see below that the correct question is the following: 

\begin{ques} Let $(X,Y)$ be a smooth $\frac{1}{2}$-log CY threefold pair. Let $C\subset X$ a general smooth rational curve of $Y$-degree $d$. Consider $Z=C\cap Y$ and fix it. Can we construct rational families of smooth general rational curves of degree $d$ such that for every element $C_t$ of the family, we have: $C_t\cap Y=Z$, this is, a rational family preserving $Z$?
\end{ques}

In this setting, we can have analogues to the questions (I), (I*), (II), (III) of section \ref{S:Abs}.

\begin{ques}[I] Let $C\subset X$ be a general rational curve of $Y$-degree $d$. Let $F$ be a family of rational curves whose generic component $C_t$ is a general rational curve of $Y$-degree $d$ and such that $C_t\cap Y=C\cap Y$ for all $t\in F$. If $\dim F>0$, is the relative Abel-Jacobi map always nontrivial?
\end{ques}
In \cite{A25}, we have considered log versions of infinitesimal Abel-Jacobi maps for any smooth pair $(X,Y)$.
\begin{ques}[I*]\label{ques:logI*} Let $C\subset X$ be a smooth rational curve of $Y$-degree $d$. Is the infinitesimal Abel-Jacobi map
\begin{equation}\label{eq:relI*}
\dAJ: H^0(N_{C/X}(-Y))\to H^1(\Omega_X^2(\log Y))^*
\end{equation}
always injective? 
\end{ques}

In \cite[Section 4]{A25}, we generalize a criterion of Clemens \cite{C89} to establish a non-vanishing condition for (\ref{eq:relI*}), which we apply to the Fermat cubic threefold $X=\{\sum_{i=0}^4 X_i^3\}$. Specifically, we consider a cone of lines over a smooth elliptic curve $E$ with vertex $p$ and choose a hyperplane $Y$ passing through $p$. For any line $L$ in this cone, the normal bundle splits as $N_{L/X}\cong \mathscr{O}(1)\oplus \mathscr{O}(-1)$, implying that $\dim H^0(N_{L/X}(-Y))=1$. By combining the generalized criterion with explicit computations, we show that the infinitesimal Abel-Jacobi map $dAJ$ is non-zero, thereby proving its injectivity in this setting.

\subsection{1/2 Duality and deformations}
To recover the duality (\ref{eq:dualCY}), let us consider the following setting: let $X$ be a smooth projective threefold and assume there exists a smooth divisor $Y\subset X$ such that $K_X+2[Y]\cong \mathscr{O}_X$, we think of $Y\in \abs{-\frac{1}{2} K_X}$. The basic examples are smooth Fano threefolds of Picard rank one and index two, such as the cubic threefold.

The main duality in this case is as follows. 

\begin{thm}\label{thm:LogDuality} Let $(X,Y)$ be as above, consider a smooth curve $C\subset X$. Then 
$$ H^0(N_{C/X}(-Y))\cong H^1(N_{C/X}(-Y))^* .$$ 
\end{thm}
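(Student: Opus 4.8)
The plan is to reduce the statement to Serre duality on the curve $C$ together with the adjunction formula, in exact parallel with the absolute Calabi--Yau identity \eqref{eq:dualCY}; the only genuinely new input is the relation $K_X + 2[Y] \cong \mathscr{O}_X$.

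First I would reformulate the claim as a statement about a single line bundle. Since $C$ is smooth, $N_{C/X}$ is a rank-two vector bundle on $C$, and hence so is $N_{C/X}(-Y) := N_{C/X}\otimes\mathscr{O}_X(-Y)|_C$. For any rank-two bundle $E$ on a smooth projective curve one has $E^{\vee}\cong E\otimes(\det E)^{-1}$, so Serre duality on $C$ gives
\[
H^1\bigl(N_{C/X}(-Y)\bigr)^{*}\;\cong\;H^0\bigl(N_{C/X}(-Y)^{\vee}\otimes K_C\bigr)\;\cong\;H^0\Bigl(N_{C/X}(-Y)\otimes\bigl(\det N_{C/X}(-Y)\bigr)^{-1}\otimes K_C\Bigr).
\]
Thus the theorem is equivalent to the single isomorphism $\det\bigl(N_{C/X}(-Y)\bigr)\cong K_C$.

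Next I would verify this by a direct computation of the determinant. For a rank-two bundle one has $\det(E\otimes M)\cong\det E\otimes M^{\otimes 2}$, so $\det\bigl(N_{C/X}(-Y)\bigr)\cong\det N_{C/X}\otimes\mathscr{O}_X(-2Y)|_C$. The adjunction formula $K_C\cong K_X|_C\otimes\det N_{C/X}$ gives $\det N_{C/X}\cong K_C\otimes K_X^{-1}|_C$, whence
\[
\det\bigl(N_{C/X}(-Y)\bigr)\;\cong\;K_C\otimes\bigl(K_X\otimes\mathscr{O}_X(2Y)\bigr)^{-1}\big|_C\;\cong\;K_C,
\]
where the final isomorphism is precisely the hypothesis $K_X+2[Y]\cong\mathscr{O}_X$ restricted to $C$. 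Feeding this back into the displayed Serre-duality isomorphism yields $H^1(N_{C/X}(-Y))^{*}\cong H^0(N_{C/X}(-Y))$, as claimed.

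I do not anticipate a real obstacle: the chain of isomorphisms is essentially forced once one observes that twisting $N_{C/X}$ by $-Y$ is exactly the correction needed to turn the adjunction bundle $\det N_{C/X}=K_C\otimes K_X^{-1}|_C$ into $K_C$, because $-Y$ is ``half of $K_X$''. The only points deserving a line of care are bookkeeping: $C$ is assumed smooth but need not be rational or irreducible, so Serre duality and adjunction apply verbatim; and the transversality of $C$ with $Y$ is not used in this line-bundle argument (it only enters later, when one wants to control $H^0(N_{C/X}(-Y))$ geometrically). Finally, if one wants this isomorphism to be canonical rather than merely abstract --- as is needed to identify its source and target with the (co)kernel data of a log Abel--Jacobi map, in the spirit of Theorem \ref{thm:deltaDual} --- I would fix once and for all a trivialization of $K_X\otimes\mathscr{O}_X(2Y)$, a ``$\tfrac12$-log volume form'', which plays here the role of the volume form $\Omega\in H^0(K_X)$ in the absolute case.
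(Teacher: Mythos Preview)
Your proof is correct and is essentially the same as the paper's: both apply Serre duality on $C$, the rank-two identity $E^{\vee}\cong E\otimes(\det E)^{-1}$ (which the paper writes as $\det N_{C/X}\otimes N_{C/X}^*\cong N_{C/X}$), adjunction, and the hypothesis $K_X+2[Y]\cong\mathscr{O}_X$. The only difference is organizational --- you first isolate the equivalent condition $\det\bigl(N_{C/X}(-Y)\bigr)\cong K_C$ and then verify it, whereas the paper computes $K_C\otimes N_{C/X}^*(Y)$ directly --- and your remarks on transversality and on fixing a trivialization of $K_X\otimes\mathscr{O}_X(2Y)$ for canonicity are apt additions.
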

\begin{proof}
By Serre duality:
$$H^1(N_{C/X}(-Y))^*=H^0(K_C\otimes N_{C/X}^*(Y))$$
By adjunction:
$$K_C=K_X|_C\otimes \det N_{C/X} $$
Using $K_X|_C\cong \mathscr{O}_C(-2Y)$ and $\det N_{C/X}\otimes N_{C/X}^*\cong N_{C/X}$, we obtain the result.
\end{proof}

We learned this argument from Mark Green. A similar computation appeared in \cite{AGG24b}.

Let $\mathcal{I}_C, \mathcal{I}_Y$ denote the ideal sheaves of $C$ and $Y$ respectively. Denote by 
$$T_X(-\log C+Y)=\{\nu\in T_X\mid \nu (\mathcal{I}_C)=\mathcal{I}_C, \nu (\mathcal{I}_Y)=\mathcal{I}_Y \} $$
the sheaf of vector fields preserving the ideals $\mathcal{I}_C$ and $\mathcal{I}_Y$. It fits in the following exact sequence \footnote{The surjectivity can fail in the intersection of $C$ with $Y$ is not transverse.}
$$0\to T_X(-\log C+Y)\to T_X(-\log Y)\to N_{C/X}\to 0 $$
By twisting by $\mathscr{O}_X(-Y)$ and taking the associated long-exact sequence we obtain a map:
$$\delta: H^1(T_X(-\log Y)(-Y))\to H^1(N_{C/X}(-Y)). $$
Using a generator of $H^0(K_X(2Y))\cong H^0(K_X(\log Y)(Y))$, we obtain an identification:
$$H^1(T_X(-\log Y)(-Y))\cong H^1(\Omega_X^2(\log Y)).$$
Using duality from Theorem \ref{thm:LogDuality} and an argument as in Theorem \ref{thm:deltaDual}, we obtained in \cite{A25}.
\begin{thm} The map
$$\delta: H^1(T_X(-\log Y)(-Y))\to H^1(N_{C/X}(-Y)),$$
is dual to 
$$\dAJ: H^0(N_{C/X}(-Y))\to H^1(\Omega_X^2(\log Y))^* .$$ 
\end{thm}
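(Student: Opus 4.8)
The plan is to reproduce, in the logarithmic setting twisted by $\mathscr{O}_X(-Y)$, the argument that proves Theorem \ref{thm:deltaDual}. The key observation is that both $\delta$ and the transpose $\dAJ^*$ factor through one and the same ``geometric'' map
$$\rho\colon H^1(\Omega_X^2(\log Y))\longrightarrow H^1\bigl(N^*_{C/X}\otimes\Omega_C^1(\log Z)\bigr),\qquad Z:=C\cap Y,$$
defined as restriction to $C$ followed by the projection onto the rank--one quotient of the logarithmic conormal filtration. Concretely: since $C$ meets $Y$ transversally, restricting the log conormal sequence to $C$ gives $0\to N^*_{C/X}\to \Omega_X^1(\log Y)|_C\to \Omega_C^1(\log Z)\to 0$ (the residue of a form with a log pole along $Y$ restricts to the residue along $Z$, while the transverse conormal directions of $C$ acquire no pole), and taking $\wedge^2$ of this rank--three bundle yields
$$0\to \det N^*_{C/X}\to \Omega_X^2(\log Y)|_C\to N^*_{C/X}\otimes\Omega_C^1(\log Z)\to 0,$$
whose induced map on $H^1$, composed with the restriction $H^1(\Omega_X^2(\log Y))\to H^1(\Omega_X^2(\log Y)|_C)$, is $\rho$. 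Now Serre duality on $C$ gives $H^1(N^*_{C/X}\otimes\Omega_C^1(\log Z))\cong H^0(N_{C/X}(-Z))^*=H^0(N_{C/X}(-Y))^*$ (using $(N^*_{C/X}\otimes K_C(Z))^\vee\otimes K_C\cong N_{C/X}(-Z)$ and $\mathscr{O}_C(-Z)\cong\mathscr{O}_X(-Y)|_C$), and composing $\rho$ with this identification is, by the residue description of the relative infinitesimal Abel--Jacobi map established in \cite{A25}, exactly $\dAJ^*$. On the other hand, the adjunction $K_C\cong K_X|_C\otimes\det N_{C/X}$ together with the hypothesis $K_X|_C\cong\mathscr{O}_C(-2Z)$ (the $\frac12$--structure) gives an isomorphism of sheaves on $C$, $N^*_{C/X}\otimes\Omega_C^1(\log Z)=N^*_{C/X}\otimes K_C(Z)\cong N_{C/X}(-Z)=N_{C/X}(-Y)$, and the composite of these two identifications of $H^1(N^*_{C/X}\otimes\Omega_C^1(\log Z))$ is precisely the isomorphism $H^1(N_{C/X}(-Y))\cong H^0(N_{C/X}(-Y))^*$ of Theorem \ref{thm:LogDuality}.

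It then remains to identify $\delta$ with $\rho$ followed by the adjunction isomorphism $H^1(N^*_{C/X}\otimes\Omega_C^1(\log Z))\cong H^1(N_{C/X}(-Y))$. The hypothesis $K_X+2[Y]\cong\mathscr{O}_X$ furnishes a nowhere--vanishing section $\sigma$ of $\Omega_X^3(\log Y)(Y)$, and contraction $\nu\mapsto\iota_\nu\sigma$ gives the isomorphism $T_X(-\log Y)(-Y)\cong\Omega_X^2(\log Y)$ already used in the excerpt. The content is to check that this isomorphism carries the subsheaf $T_X(-\log C+Y)(-Y)$ onto $\ker\rho$ — equivalently, that a logarithmic vector field preserves $\mathcal{I}_C$ if and only if $\iota_\nu\sigma$ restricts along $C$ into the sub--line--bundle $\det N^*_{C/X}$ of $\Omega_X^2(\log Y)|_C$. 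Away from $Z$ this is Green's computation in the proof of Theorem \ref{thm:deltaDual}; at a point of $Z$ it is a local check in coordinates $(y,x_1,x_2)$ with $Y=\{y=0\}$, $C=\{x_1=x_2=0\}$ (transverse), and $\sigma=\frac{dy}{y}\wedge dx_1\wedge dx_2$ up to a unit. Granting this, the surjection $T_X(-\log Y)(-Y)\twoheadrightarrow N_{C/X}(-Y)$ is carried to the quotient map $\Omega_X^2(\log Y)\to N^*_{C/X}\otimes\Omega_C^1(\log Z)$, so that $\delta$, being $H^1$ of this surjection, equals $\rho$ postcomposed with the adjunction isomorphism. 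Combined with the previous paragraph, $\delta$ composed with the duality of Theorem \ref{thm:LogDuality} equals $\dAJ^*$, which is the claim (up to the usual scalar ambiguity in the normalization).

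The main obstacle is the local statement at the points of $Z$ in the second step: one must verify that contraction with the log volume form $\sigma$ matches the sheaf of log vector fields preserving $\mathcal{I}_C$ with the log $2$--forms restricting into $\det N^*_{C/X}$ along $C$, and that transversality of $C$ and $Y$ enters here essentially — this is precisely the point at which non--transverse intersections break the argument, parallel to the failure of surjectivity noted in the footnote above. A secondary, purely bookkeeping, issue is to pin down the Serre--duality and residue conventions so that the identification with $\dAJ^*$ holds on the nose rather than merely up to a nonzero scalar.
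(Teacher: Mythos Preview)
Your sketch is correct and follows precisely the route the paper indicates: the paper does not give a self-contained proof here but defers to \cite{A25}, saying only that one uses the duality of Theorem \ref{thm:LogDuality} together with ``an argument as in Theorem \ref{thm:deltaDual}'' (Green's argument in the absolute case). What you have written is exactly that argument carried over to the log setting --- the log conormal filtration on $\Omega_X^2(\log Y)|_C$, the identification of $\delta$ with the restriction-and-project map $\rho$ via contraction with the $\frac12$-log volume form $\sigma$, and the two compatible identifications of $H^1(N^*_{C/X}\otimes\Omega_C^1(\log Z))$ coming from Serre duality and from the $\frac12$-adjunction --- so there is no substantive difference in approach.
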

Therefore, 
\begin{cor} The first deformations of a smooth curve $C\subset X$ fixing the points $C\cap Y$ are controlled by the log $\dAJ^*$.
\end{cor}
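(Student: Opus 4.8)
The plan is to derive this as an essentially immediate consequence of the theorem just stated; the one genuinely new point is to identify the space of ``first-order deformations of $C$ fixing $C\cap Y$'' with the source $H^0(N_{C/X}(-Y))$ of the log Abel--Jacobi map in (\ref{eq:relI*}).

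\emph{Step 1: identifying the deformation space.} Write $Z=C\cap Y$. By the standing transversality assumption $Z$ is a reduced divisor of degree $Y\cdot C$ on $C$ and $\mathscr{O}_C(Z)\cong\mathscr{O}_X(Y)|_C$. First I would recall that a first-order deformation of $C$ inside the fixed threefold $X$ is a section $s\in H^0(N_{C/X})$, and check in a local analytic model at a point $p\in Z$ (take $C$ a coordinate axis and $Y$ the transverse hyperplane through $p$) that the deformed curve $C_\epsilon$ still passes through $p$ precisely when $s(p)=0$ in $N_{C/X}\otimes k(p)$. Hence the deformations preserving every point of $Z$ — equivalently, those with $C_\epsilon\cap Y=Z$ as subschemes, transversality ensuring nothing degenerates to first order — are exactly the sections of $N_{C/X}$ vanishing along $Z$, i.e.\ $H^0\!\bigl(N_{C/X}\otimes\mathscr{O}_X(-Y)|_C\bigr)=H^0(N_{C/X}(-Y))$.

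\emph{Step 2: invoking the $\tfrac12$-duality.} By the theorem just proved, $\delta\colon H^1(T_X(-\log Y)(-Y))\to H^1(N_{C/X}(-Y))$ is dual to $\dAJ\colon H^0(N_{C/X}(-Y))\to H^1(\Omega_X^2(\log Y))^*$; combined with the identifications $H^1(T_X(-\log Y)(-Y))\cong H^1(\Omega_X^2(\log Y))$ (via a generator of $H^0(K_X(2Y))$, as above) and $H^1(N_{C/X}(-Y))\cong H^0(N_{C/X}(-Y))^*$ (Theorem \ref{thm:LogDuality}), this says $\delta$ and $\dAJ^*$ are the same map. It then remains to run the long exact sequence of
$$0\to T_X(-\log C+Y)(-Y)\to T_X(-\log Y)(-Y)\to N_{C/X}(-Y)\to 0$$
exactly as in the absolute case (Corollary \ref{cor:1stDefdAJ}): from $H^0(N_{C/X}(-Y))\to H^1(T_X(-\log C+Y)(-Y))\to H^1(\Omega_X^2(\log Y))\xrightarrow{\,\delta\,}H^1(N_{C/X}(-Y))$ one reads that the image of $H^1(T_X(-\log C+Y)(-Y))$ is $\ker\delta=\ker\dAJ^*$, so the obstruction to extending the anchored curve $C$ along a first-order deformation $\xi\in H^1(\Omega_X^2(\log Y))$ of the pair is $\dAJ^*(\xi)\in H^0(N_{C/X}(-Y))^*$. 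Reproducing Clemens's argument from Section \ref{S:Abs}, if $\dAJ$ is injective and $C$ moves along a generic such deformation, then $\delta$ is both surjective and zero, forcing $H^0(N_{C/X}(-Y))=H^1(N_{C/X}(-Y))=0$.

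\emph{Main obstacle.} I expect the delicate point to be the deformation-functor bookkeeping in Step 2 — checking that $H^1(T_X(-\log C+Y)(-Y))$ is genuinely the tangent space to the functor of ``deformations of $C$ preserving $Z$, dragged along a deformation of $(X,Y)$'', and in particular handling the twist by $\mathscr{O}_X(-Y)$ correctly, since it is precisely this twist that separates the present problem from deforming the triple $(X,Y,C)$ and that produces the $\tfrac12$-phenomenon in the first place. One must also keep careful track of transversality: by the footnote it is what makes $T_X(-\log Y)\to N_{C/X}$ surjective, hence the displayed sequence exact, and it is used again in Step 1 for $\mathscr{O}_C(Z)\cong\mathscr{O}_X(Y)|_C$; without it both ingredients fail.
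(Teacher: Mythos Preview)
Your proposal is correct and follows essentially the same approach as the paper, which treats the corollary as an immediate consequence of the preceding theorem (the paper's ``proof'' is the single word ``Therefore''). You have simply made explicit two things the paper leaves tacit: the identification in Step~1 of the anchored first-order deformations with $H^0(N_{C/X}(-Y))$, and the long-exact-sequence reading in Step~2; the material at the end of Step~2 about surjectivity of $\delta$ and the vanishing of $H^i(N_{C/X}(-Y))$ actually belongs to the paragraph \emph{after} the corollary rather than to its proof.
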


If we assume that Question \ref{ques:logI*} has a positive answer, by the above Corollary we obtain that $\delta$ is surjective. We have proved in \cite{A25}, that under the extra hypothesis that $X$ is Fano, the deformations controlled by $T_X(-\log Y)(-Y)$ are unobstructed, this is, $H^2(T_X(-\log Y)(-Y))=0$.

Now, if we assume $C\subset X$ rational, then $$N_{C/X}\cong \mathscr{O}_C(a_1)\oplus \mathscr{O}_C(a_2)$$ with $a_1+a_2=\deg(-K_X|_C)-2$. For a curve $C$ to move with a generic deformation $X_t$ of $X$ given by $H^1(T_X(-\log Y)(-Y))$, we must have that $H^1(N_{C/X}(-Y))=H^0(N_{C/X}(-Y))=0$. For this to hold, we must have:
$$N_{C/X}(-Y)\cong \mathscr{O}_C(-1)\oplus \mathscr{O}_C(-1). $$
Note that this implies $N_{C/X} = \mathcal{O}(d-1) \oplus \mathcal{O}(d-1)$ if $Y$ cuts $C$ in $d$ points.

The analogue of conjecture \ref{ques:II} is:
\begin{conj}\label{conjRC} Let $(X,Y)$ be a smooth $\frac{1}{2}$-log CY threefold pair and $C$ a smooth rational curve of $Y$-degree $d$, all three general. Denote $Z=C\cap Y$, then $X$ admits only a finite number of general rational curves whose intersection with $Y$ is $Z$. Moreover, they all have normal bundle
$$N_{C/X}(-Y)\cong \mathscr{O}_C(-1)\oplus \mathscr{O}_C(-1). $$ 
\end{conj}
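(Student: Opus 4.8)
\emph{A strategy toward Conjecture~\ref{conjRC}.} The statement is conditional by design --- it is the conclusion one expects to draw once Question~\ref{ques:logI*} is settled, exactly as Conjecture~\ref{ques:II} is drawn from Question~\ref{ques:I*}. The plan is to reduce both assertions, the finiteness of anchored curves and the shape of $N_{C/X}$, to the single vanishing
\[
H^0(N_{C/X}(-Y))=0 \qquad\text{for the general triple }(X,Y,C),
\]
and then, assuming Question~\ref{ques:logI*}, to extract that vanishing via the $\frac{1}{2}$-duality of Theorem~\ref{thm:LogDuality}. The reduction of the normal-bundle assertion is immediate: adjunction together with $-K_X\cong\Oo_X(2Y)$ gives $\deg N_{C/X}(-Y)=-2$, so writing $N_{C/X}\cong\Oo_C(a_1)\oplus\Oo_C(a_2)$ with $a_1+a_2=2d-2$ via Grothendieck--Birkhoff, the vanishing forces $a_1,a_2\le d-1$, hence $a_1=a_2=d-1$ and $N_{C/X}(-Y)\cong\Oo_C(-1)\oplus\Oo_C(-1)$. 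For the finiteness assertion one uses that $H^0(N_{C/X}(-Y))=H^0(N_{C/X}(-Z))$ (by transversality of $C$ and $Y$) is the tangent space at $C$ to the scheme of general rational curves of degree $d$ on $X$ with fixed intersection $Z=C\cap Y$ --- observe that since $C\subset X$ and $Y=X\cap H$, we have $C\cap Y=C\cap H$, a length-$d$ scheme by B\'ezout, so $Z\subseteq C$ already forces $C\cap Y=Z$; the global form of the count appears below.

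For the vanishing I would run the deformation argument of \cite{A25}. Assuming Question~\ref{ques:logI*} for the general triple, $\dAJ$ is injective, so, $\delta$ being dual to $\dAJ$, the map $\delta\colon H^1(T_X(-\log Y)(-Y))\to H^1(N_{C/X}(-Y))$ is surjective. On the other hand, in the long exact sequence of
\[
0\to T_X(-\log C+Y)(-Y)\to T_X(-\log Y)(-Y)\to N_{C/X}(-Y)\to 0,
\]
the requirement that the general rational curve $C$ deform along a \emph{generic} deformation of the $\frac{1}{2}$-log Calabi--Yau structure on $(X,Y)$ --- the relative analogue of ``$C$ moves in a generic quintic'' --- says precisely that $H^1(T_X(-\log C+Y)(-Y))\to H^1(T_X(-\log Y)(-Y))$ is surjective, i.e.\ $\delta=0$; here the Fano vanishing $H^2(T_X(-\log Y)(-Y))=0$ of \cite{A25} is what makes the deformation space of $(X,Y)$ smooth, so that ``generic'' is meaningful. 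A linear map that is simultaneously surjective and zero has vanishing target, so $H^1(N_{C/X}(-Y))=0$, and then Theorem~\ref{thm:LogDuality} yields $H^0(N_{C/X}(-Y))=0$, completing the reduction.

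Thus the conjecture rests on two ingredients: (i) Question~\ref{ques:logI*} for the general rational curve of degree $d$, and (ii) that such a curve deform along a generic deformation of the $\frac{1}{2}$-log structure --- two faces of the same phenomenon, both implied by the generic $N_{C/X}$ being maximally balanced. Ingredient (ii) has a concrete incidence-theoretic form that also delivers the finiteness: let $\mathcal{B}$ be the moduli of pairs $(X,Y)$, let $\mathcal{C}\to\mathcal{B}$ carry the fibre $\mathcal{H}_{d,0}(X)$ (of dimension $2d$), and let $\mathcal{Z}\to\mathcal{B}$ be the relative Hilbert scheme of $d$ points on the universal boundary, smooth of relative dimension $2d$ since $Y$ is a smooth surface, so $\dim\mathcal{Z}=\dim\mathcal{B}+2d=\dim\mathcal{C}$. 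The anchoring map $\pi\colon\mathcal{C}\to\mathcal{Z}$, $(X,Y,C)\mapsto(X,Y,C\cap Y)$, has fibres the sets the conjecture counts; ingredient (ii) is equivalent to $\pi$ being dominant, and then the equality of dimensions makes $\pi$ generically finite --- the finiteness assertion. Consistently, the kernel of $d\pi$ at $C$ is $H^0(N_{C/X}(-Y))$, so the vanishing of the previous paragraph says $\pi$ is moreover generically unramified. For $2\le d\le 5$, the dominance of $\pi$ should be checkable from the explicit structure of $\mathcal{H}_{d,0}(X)$ and of its Abel--Jacobi map recalled above from \cite{HRS05}.

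The genuine obstacle is Question~\ref{ques:logI*} for a \emph{general} curve: so far the injectivity of the relative infinitesimal Abel--Jacobi map is verified only for the cone of lines on the Fermat cubic, via a relative version of Clemens' criterion \cite{A25}, and it is intertwined with ingredient (ii), which for $d\ge 6$ is open --- indeed even the normal bundle of a general rational curve of high degree on a cubic threefold is presently not understood. The route I would pursue is the limiting-Hodge-structure viewpoint of Section~\ref{S:limiting}: realizing the relative picture as a degeneration of the absolute Clemens picture --- or, more concretely, degenerating an arbitrary $(X,Y,C)$ to a Fermat-type cone of lines where the known criterion applies --- should let one transport the absolute infinitesimal statements into the relative setting; a direct variation-of-Hodge-structure argument in the spirit of \cite{G89} is the alternative.
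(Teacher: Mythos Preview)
The paper offers no proof of Conjecture~\ref{conjRC} --- it is stated as a conjecture --- and your strategy reproduces exactly the heuristic the paper gives to motivate it: assume Question~\ref{ques:logI*} so that $\delta$ is surjective, assume the general $C$ moves with a generic $\frac{1}{2}$-log deformation so that $\delta=0$, conclude $H^1(N_{C/X}(-Y))=0$, apply the duality of Theorem~\ref{thm:LogDuality} to get $H^0(N_{C/X}(-Y))=0$, and read off the balanced splitting from Grothendieck--Birkhoff. Your incidence-correspondence packaging of the finiteness (via $\dim\mathcal{C}=\dim\mathcal{Z}$ and generic finiteness of the anchoring map $\pi$) and your explicit isolation of the two unproved ingredients go a step beyond what the paper writes out, but they are elaborations of the same argument rather than a different route.
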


The general smooth rational curve inside $\mathbb{P}^3$ is known to be perfectly balanced. Consider $C$ a smooth rational curve of degree $n$ in $\mathbb{P}^3$. Eisenbud-van de Ven \cite{EvdV81} proved that if $C$ is general then 
$$
N_{C/X}\cong \mathscr{O}_C(2n-1)\oplus \mathscr{O}_C(2n-1).
$$

\subsection{Triviality of the Abel-Jacobi map}
We have done the first steps towards the analogue of question \ref{ques:III} in \cite{A25} by proving a Noether-Lefschetz theorem for the universal family of open hypersurfaces. But the analogue results of \cite{G89} are not yet available in the log-setting.

\appendix

\section{Relative Gromov-Witten Invariants}\label{S:Virtual}
\begin{center}
    {\sc Rodolfo Aguilar and Adrian Zahariuc\footnote{University of Windsor, Department of Mathematics and Statistics, 401 Sunset Avenue, Windsor, ON, N9B 3P4, Canada. E-mail: \texttt{adrian.zahariuc@uwindsor.ca}}}
\end{center}
\vspace{0.4cm}

\subsection{Relative Stable Maps}

Let $(X, D)$ be a pair consisting of a smooth projective variety $X$ and a smooth divisor $D$. We fix discrete data $\Gamma = (g, n, \beta, \vec{\mu})$, where $g$ is the genus, $\beta \in H_2(X, \mathbb{Z})$ is the curve class, and $\vec{\mu} = (\mu_1, \dots, \mu_n)$ is a partition of the intersection number $D \cdot \beta$, representing the contact orders at the $n$ marked points.

Jun Li constructs the moduli space of relative stable maps, denoted $\Mbar_{\Gamma}(X, D)$ or $\Mbar_{g,n}(X, D, \beta, \vec{\mu})$. A point in this space is represented by a map
$$ f: (C, p_1, \dots, p_n) \longrightarrow X[k] $$
where $X[k]$ is a target expansion of $X$ (an ``accordion'' of $k$ rubber components $P = \mathbb{P}(N_{D/X} \oplus \mathcal{O}_D)$ attached to $X$ along $D$), satisfying a series of conditions, see \cite{Li01}.

There is a relative evaluation map to the divisor:
$$ \ev_D: \Mbar_{\Gamma}(X, D) \longrightarrow D^n, \quad [f] \longmapsto (f(p_1), \dots, f(p_n)). $$

\subsection{Prime Fano threefolds of index two} Following the notation of \cite{Z18}, let $X=\dP_3^d$ be a smooth del Pezzo threefold of degree $d$. By definition, this is a smooth projective Fano threefold such that $-K_X=2Y$. We call the divisor $Y$ the hyperplane class. Note that $\dP_2^d:=Y$ is a del Pezzo surface of the same degree. 

	The main goal of this appendix is to prove the following fact.

	\begin{thm}\label{thm:NoBubbles}
			Let $X$ be a smooth del Pezzo threefold of degree $d\in \{2,3,4,5,8\}$, $X$ general if $d=2$. Let
			\[ \ev:\Mbar_\Gamma(X,Y) \longrightarrow Y^e \]
			as at the beginning of Section \ref{S:RelClem}, and let $\xi=(\xi_1,\dots,\xi_e)\in Y^e$ be a configuration of $e$ \emph{general} points on $Y$.
\begin{enumerate}
  \item The fiber $\mathrm{ev}^{-1}(\xi)$ contains no maps with nontrivial target expansion (no rubber levels).
  \item The fiber $\mathrm{ev}^{-1}(\xi)$ contains no reducible domains.
  \item The natural identification of an unexpanded relative map with an absolute stable map
  induces an isomorphism of stacks
  \[
  \mathrm{ev}^{-1}(\xi) \;\xrightarrow{\ \cong\ }\; \Mbar_{0,e}(X,e)\times_{X^e}\{\xi\},
  \]
  and this fiber is finite, reduced, and consists of smooth maps meeting $Y$ transversely.
\end{enumerate}
		\end{thm}
		
		Theorem \ref{thm:NoBubbles} will turn out to be a simple corollary of the proposition below.
	
	\begin{prop}\label{prop:CubicZ} Let $X,Y,e$ as in Theorem \ref{thm:NoBubbles} and let $\xi_1,\ldots,\xi_e \in Y$ be general points on $Y$. Then, the fiber of the evaluation morphism $\ev: \Mbar_{0,e}(X,e) \to X^e$ over $\xi = (\xi_1,\ldots,\xi_e)$ is finite, reduced, and consists of closed immersions with smooth source which intersect $Y$ transversally, and have balanced normal bundle $\Ocal(e-1) \oplus \Ocal(e-1)$. 
	\end{prop}
	
	\begin{proof}
We prove the statement by induction on $e$. The base case $e=1$ corresponds to the fiber of the evaluation map $\text{ev}: \overline{\mathcal{M}}_{0,1}(X,1) \to X$ over a general point $\xi_1 \in Y$, which geometrically represents the set of lines in $X$ passing through $\xi_1$. The fact that this set is finite (e.g., consisting of 12 lines for $d=2$, 6 lines for $d=3$, etc.), reduced, and consists of unobstructed closed immersions with balanced normal bundle is well-known; see, for instance, \cite[Section 2.2]{Z18}. Assume that $e \geq 2$ and that the statement is true for all $e'<e$.

		Let $(f:C \to X,x_1,\ldots,x_e) \in \ev^{-1}(\xi)({\mathbb C})$. First, we show that $C$ is smooth. Assume by way of contradiction that $q \in C$ is a node, and let $C = C_1 \cup_q C_2$. For $i=1,2$, let $f_i = f|_{C_i}$, $A_i = \{ j \in [e] \mid x_j \in C_i\}$, $a_i = |A_i|$, and $e_i = \deg f_i^*\Ocal_X(Y)$. Note that
		\begin{equation}\label{eqn:equalsums} a_1+a_2 = e = e_1+e_2. \end{equation}
		We claim that $e_1,e_2 > 0$. Assume, for instance, that $e_2 = 0$. Since $C_2$ is connected, it is contracted to a point by $f_2$, so, in particular, $a_2 \leq 1$ since $\xi_1,\ldots,\xi_e$ are distinct, that is, there is at most one marked point on $C_2$. If $C_2 \cong \Pbb^1$, it is a destabilizing component for $f$, if $C_2 \not\cong \Pbb^1$, its dual graph has at least two leaves, hence at least one leaf which doesn't contain $q$, which is thus a destabilizing component for $f$ -- in either situation, the stability of the map is contradicted. Hence, $e_1,e_2>0$. 
		
		Assume that $e_i \leq a_i$ for some $i \in \{1,2\}$. Let $A'_i \subseteq A_i$ such that $|A'_i| = e_i$. By the inductive assumption with $e_i < e$ in the role of $e$ (note that $\{\xi_j \mid j \in A'_i\}$ are still $e_i$ \emph{general} points on $Y$), $C_i \cong \Pbb^1$ and $f_i:C_i \to X$ is a closed immersion of degree $e_i$ transverse to $Y$. Then, $(f_i(C_i) \cdot Y) = e_i$, but $\xi_j \in f(C_i) \cap Y$ for all $j \in A_i$, so, it is forced that $a_i = e_i$ and $f_i(C_i) \cap Y = \{\xi_j \mid j \in A_i\}$. In particular, neither of the strict inequalities $e_1<a_1$ and $e_2<a_2$ is possible, so $e_1 = a_1$ and $e_2 = a_2$ by \eqref{eqn:equalsums}, and everything above holds true for $i=1,2$. 
		
		Heuristically, the reason the scenario above is impossible is that $f(C_1)$ and $f(C_2)$ ought to be disjoint under the assumption that $\xi_1,\ldots,\xi_e$ are general. More formally, we proceed as follows to derive a contradiction. Let ${\mathcal B}$ be the stratum of $\Mbar_{0,e}(X,e)$ consisting of stable maps $(g:D \to X,y_1,\ldots,y_e)$ such that $D = D_1 \cup_r D_2$, with $\{ j \in [e] \mid y_j \in D_i \} = A_i$ and $\deg (g|_{D_i})^*\Ocal_X(Y) = e_i$ -- see \cite{BM96} for the general construction. By definition, $[f] \in {\mathcal B}({\mathbb C})$. Since the points $\xi_1,\ldots,\xi_e$ are assumed general, there exists an open substack 
		\[ {\mathcal Y} \subseteq {\mathcal B} \times_{\ev|_{\mathcal B}, X^e} Y^e \] 
		such that $[f] \in {\mathcal Y}({\mathbb C})$, any $(g:D \to X,y_1,\ldots,y_e) \in {\mathcal Y}({\mathbb C})$, with $D = D_1 \cup_r D_2$ as above, has the property that $D_i \cong {\mathbb P}^1$ and $g|_{D_i}$ is a closed immersion, and $\ev|_{\mathcal Y}$ is dominant onto $Y^e$. The fact that $\ev|_{\mathcal Y}$ is dominant implies that $\dim {\mathcal Y} \geq 2e$. Let
		\[ \varphi:\Mbar_{0,e}(X,e) \longrightarrow \Mbar_{0,0}(X,e) \]
		be the morphism which forgets the markings and stabilizes the resulting unmarked map. Note that $\varphi|_{\mathcal Y}$ is quasi-finite by the definition of ${\mathcal Y}$ -- indeed, the only freedom lies in permuting the indices of the marked points. However, this and the previous remark that $\dim {\mathcal Y} \geq 2e$ contradict the description of the irreducible components of $\Mbar_{0,0}(X,e)$ in \cite{LT19}. This concludes the proof of the fact that $C$ is smooth. 
		
		Second, we show that $f(C) \not\subset Y$. This can be proved in many ways; one way is to argue by specialization as in \cite{Z18}. Let $E \subset Y$ be a general hyperplane section of $Y$, and let us specialize $\xi_1,\ldots,\xi_e$ to general points $\xi_1,\ldots,\xi_e \in E$. Assume by way of contradiction that $f(C) \subset Y$. Since $\xi_1,\ldots,\xi_e \in f(C) \cap E$ and $(f_*[C] \cdot E)_Y = e$, we must have $f(C) \cap E = \{ \xi_1,\ldots,\xi_e \}$ scheme-theoretically and hence
		\[ \Ocal_Y(f(C))|_E \cong \Ocal_E(\xi_1+\cdots+\xi_e). \]
		However, this is impossible for general $\xi_1,\ldots,\xi_e \in E$, since $E$ is an elliptic curve, and the image of the restriction map $\Pic(Y) \to \Pic(E)$ is discrete. In conclusion, $f(C) \not\subset Y$. 
		
		Recall from \cite{LT19} that $\Mbar_{0,0}(X,e)$ has two irreducible components: the `main' component, which generically parametrizes closed immersions of $\Pbb^1$ into $X$, and one which generically parametrizes degree $e$ covers of lines in $X$. (Both components have dimension $2e$.) It is clear inductively that the same description applies to the irreducible components of $\Mbar_{0,n} (X,e)$, since $\Mbar_{0,n+1} (X,e) \to \Mbar_{0,n} (X,e)$ is flat of relative dimension $1$ with generically irreducible fibers, and that both components have dimension $2e+n$. 
				
		Let ${\mathcal V}$ be an irreducible component of $\Mbar_{0,e}(X,e) \times_{X^e} Y^e$ which dominates $Y^e$ by $\ev$. (We may endow ${\mathcal V}$ with the reduced structure.) Let 
		\[ \overline{\varphi}: \Mbar_{0,e}(X,e) \times_{X^e} Y^e \longrightarrow \Mbar_{0,0}(X,e) \] 
		be the restriction of $\varphi$. Note that any irreducible component of $\Mbar_{0,e}(X,e) \times_{X^e} Y^e$ has dimension at least $2e$, and
		\begin{equation}\label{eqn:dimlowerbound} 
			\dim {\mathcal V} \geq 2e
		\end{equation}
		in particular. Indeed, $\Mbar_{0,e}(X,e)$ has two irreducible components, both of dimension $3e$, and $\Mbar_{0,e}(X,e) \times_{X^e} Y^e$ can be described inside $\Mbar_{0,e}(X,e)$ as the common vanishing locus of $e$ sections of line bundles. Since ${\mathcal V}$ dominates $Y^e$, it follows from what was proven above that the general member of ${\mathcal V}$ has smooth source and is not contained in $Y$, and hence its image intersects $Y$ precisely in $\{\xi_1,\ldots,\xi_e\}$, the image of the marked points. Hence,
		\[ \overline{\varphi}|_{\mathcal V}: {\mathcal V} \longrightarrow \Mbar_{0,0}(X,e) \] 
		is generically finite. However, by \eqref{eqn:dimlowerbound}  and $\dim \Mbar_{0,0}(X,e) = 2e$, equality must occur in \eqref{eqn:dimlowerbound}, and $\overline{\varphi}|_{\mathcal V}$ must be surjective onto an irreducible component of the target, which can only be the main component. By \cite{LT19}, the general element of ${\mathcal V}$ must be a closed immersion. Since $\dim {\mathcal V} = 2e = \dim Y^e$ and $\ev|_{\mathcal V}: {\mathcal V} \to Y^e$ is surjective by assumption, $\ev|_{\mathcal V}$ must be generically finite. 
		
		Let $f:C \to X$ be a general element of the main irreducible component of $\Mbar_{0,0}(X,e)$ and $(f:C \to X,x_1,\ldots,x_e) \in {\mathcal V}({\mathbb C})$ a preimage of it by $\overline{\varphi}|_{\mathcal V}$. We have already shown that $C \cong \Pbb^1$, and that $f$ is a closed immersion which intersects $Y$ transversally at $\{f(x_1),\ldots,f(x_e)\}$. We claim that 
		\[ N_f \cong \Ocal(e-1) \oplus \Ocal(e-1). \]
		It suffices to prove that the main irreducible component of $\Mbar_{0,0}(X,e)$ contains an unramified stable map $f:{\mathbb P}^1 \to X$ with balanced normal. However, this follows immediately from Theorem 1.1 in \cite{Z18}. 
		
		The only remaining claim, that the general fiber of $\ev$ is reduced, follows from $N_f \cong \Ocal(e-1) \oplus \Ocal(e-1)$ by standard deformation theory. 
		\end{proof}

		\begin{proof}[Proof of Theorem \ref{thm:NoBubbles}]
			Let $(g:D \to W,q_1,\ldots,q_e) \in \ev^{-1}(\xi)({\mathbb C})$, where $\rho:W \to X$ is the corresponding target expansion. Then $(\rho \circ g:D \to X,q_1,\ldots,q_e)$ is a \emph{prestable} map into $X$. Its stabilization $(f:C \to X,x_1,\ldots,x_e)$ belongs to the fiber of the evaluation map $\Mbar_{0,e}(X,e) \to X^e$ over the general $\xi \in Y^e$. Applying Proposition \ref{prop:CubicZ}, it is a closed immersion of $C \cong \Pbb^1$ with balanced normal bundle, which intersects $Y$ precisely at $\{\xi_1,\ldots,\xi_e\}$, everywhere transversally. If $\rho$ is not an isomorphism, it is easy to see that the only way $g$ could be admissible is if $D$ is obtained from $C$ by attaching $e$ chains of lines at $x_1,\ldots,x_e$, which map isomorphically onto fibers of the ruled components of $W$. However, such a relative map is not stable, contradiction. Hence, $\rho$ is an isomorphism, and the everything else follows easily. 
		\end{proof}

\bibliographystyle{amsalpha}
\bibliography{sample}
\end{document}